\numberwithin{equation}{section}
\newtheorem{theorem}{Theorem}[section]
\newtheorem{proposition}[theorem]{Proposition}
\newtheorem{lemma}[theorem]{Lemma}
\theoremstyle{remark}
\newtheorem{remark}{Remark}[section]
\theoremstyle{definition}
\newcommand{\R}{{\mathbb{R}}}
\newcommand{\C}{{\mathbb{C}}}
\begin{document}

\title%
[Carleman estimates and waveguides]%
{Carleman estimates and necessary conditions for the existence of waveguides}

\author{Luis Escauriaza}
\address{Luis Escauriaza: Universidad del Pa\'is Vasco, Departamento de
Matem\'aticas, Apartado 644, 48080, Bilbao, Spain}
\email{luis.escauriaza@ehu.es}

\author{Luca Fanelli}
\address{Luca Fanelli: Universidad del Pa\'is Vasco, Departamento de
Matem\'aticas, Apartado 644, 48080, Bilbao, Spain}
\email{luca.fanelli@ehu.es}

\author{Luis Vega}
\address{Luis Vega: Universidad del Pa\'is Vasco, Departamento de
Matem\'aticas, Apartado 644, 48080, Bilbao, Spain}
\email{luis.vega@ehu.es}

\begin{abstract}
 We study via Carleman estimates the sharpest possible exponential decay for {\it waveguide} solutions to the Laplace equation
 \begin{equation*}
   (\partial^2_t+\triangle)u=Vu+W\cdot(\partial_t,\nabla)u,
 \end{equation*}
and find a necessary quantitative condition on the exponential decay in the spatial-variable of nonzero waveguides solutions which depends on the size of $V$ and $W$ at infinity.
\end{abstract}

\date{\today}    

\subjclass[2010]{AMS Subject Classification: 35B99, 35J05, 35J15}
\keywords{
Carleman estimates, waveguides, unique continuation} 
\maketitle


\section{Introduction}\label{sec:intro}

Consider the Laplace equation

\begin{equation}\label{eq:waveeq}
  \partial_t^2u+\triangle u= V(t,x)u,
\end{equation}
where $u=u(t,x):\R\times\R^{n}\to\C$, $V=V(t,x):\R\times\R^{n}\to\C$, $n\geq1$, with $\nabla=\nabla_x$ and $\triangle=\nabla\cdot\nabla u$. Assume that $V=V(x)$ has a positive eigenvalue $\lambda$ with corresponding eigenfunction $Q=Q(x)$, so that
\begin{equation}\label{eq:stationary}
  \triangle Q(x)-\lambda Q(x)=V(x)Q(x),\  \text{in}\ \R^n.
\end{equation}
It is known that if $V$ decays fast enough at infinity, then $Q$ has  exponential decay; i.e.
\begin{equation}\label{eq:stationarydec}
  Q(x)\leq Ce^{-\sqrt\lambda|x|},\ \text{for some}\ C>0
\end{equation}
and then
\begin{equation}\label{eq:waveguide}
  u_1(t,x)=\cos{\left(t\sqrt\lambda\right)}Q(x),\
  u_2(t,x)=\sin{\left(t\sqrt\lambda\right)}Q(x)
\end{equation}
are solutions of \eqref{eq:waveeq} in $\R^{n+1}$. Such type of solutions are called {\it waveguides} solutions of \eqref{eq:waveeq}.

Notice that {\it waveguide} solutions do not decay in the time-variable and that
\begin{equation}\label{eq:waveguidedec}
  \sup_{t\in\R}\int e^{\beta|x|}|u|^2\,dx<\infty, \ \text{only for}\ \beta<2\sqrt\lambda.
\end{equation}
It is surprising that {\it waveguide} solutions to 
\eqref{eq:waveeq} also exist in the nonlinear case, when $V=V(t,x)=|u|^p$. This was first remarked by Busca and Felmer \cite{BF} and by Dancer \cite{D} in the case $n=1$ (they study respectively the existence of odd or even solutions with respect to the time-variable) and later studied by other authors (See \cite{DKPW} and the references therein).

We are interested in finding conditions like \eqref{eq:waveguidedec} in order to describe the sharpest possible decay of a waveguide solution, when $V$ decays sufficiently fast with respect to the spatial variables at infinity. More generally, condition
\eqref{eq:waveguidedec} should be replaced by
\begin{equation}\label{eq:waveguidedecgen}
  \sup_{t\in\R}\int e^{\beta|x|^p}|u|^2\,dx<\infty,
  \qquad
  \beta<2\sqrt\lambda,
\end{equation}
where the power $p$ depends on the decay-rate of $V$.
These questions are related to unique continuation problems at infinity which arise in different contests and with different applications. 

In the last few years, some efforts have been made in this direction. Let us focus our attention on potentials $V$ verifying, $|V|\lesssim |x|^{-\alpha}$, as $|x|\to\infty$, with $\alpha\geq0$. In the stationary case, 
$u=u(x)$, we first mention Froese, Herbst, T. Hoffman-Ostenhof, and M. Hoffman-Ostenhof, who proved in \cite{FH} that if 
$\alpha\geq\frac12$, the best possible decay in \eqref{eq:waveguidedecgen} is $p=1$, and in addition that the result is sharp.
Later, Meshkov proved \cite{M} that if $\alpha=0$ the best possible condition is obtained by 
\eqref{eq:waveguidedecgen} with $p=\frac43$; and later, Cruz-Sampedro \cite{C} generalized the result to the case $0\leq\alpha<\frac12$, and finding the relation, 
$p=\frac{4-2\alpha}{3}$. In the last two papers, suitable Carleman estimates are  fundamental tools. Moreover, we remark that the examples of potentials constructed by Meshkov and Cruz-Sampedro are complex-valued.
More recently, the analogous problem for the Schr\"odinger evolution equation has been studied in \cite{EKPV1}, \cite{EKPV2}. In these works, the authors are also interested on the sharpest possible exponential-decay of solutions to the nonlinear Schr\"odinger equation which blow up in finite time. Also in this work, a suitable Carleman estimate, based on the convexity properties of solutions to \eqref{eq:waveeq} plays a crucial role.

In analogy with the results mentioned above, we state now our main result, in which we also consider first order perturbations.
\begin{theorem}\label{thm:wave}
Let $V=V(t,x):\R\times\R^n\to\C$ and $W=W(t,x):\R\times\R^n\to\C\times\C^n$ satisfy \footnote{Hereafter $\rho + $ will mean any real number strictly bigger than $\rho$. Similarly for $\rho - $.}
  \begin{align}\label{eq:assV}
   V\in L^\infty(\R\times\R^n),
   &
   \qquad
   \sup_{t\in\R}|V(t,x)|\leq\frac C{(1+|x|^2)^{\frac14+}},
   \\
   W\in L^\infty(\R\times\R^n),
   &
   \qquad
   \sup_{t\in\R}|W(t,x)|\leq \frac C{(1+|x|^2)^{\frac14+}},
   \label{eq:assW}
  \end{align}
  for some $C>0$. Then, there exists $\lambda_0=\lambda_0(C,n)$ such that if $u=u(t,x)$ in $C^1(\R, H^1(\R^n))\cap H^2_{loc}(\R\times\R^n)$ verifies the differential inequality
  \begin{equation}\label{eq:wave}
    \left|\left(\partial_t^2+\triangle\right)u\right|\leq|Vu|+\left|W\cdot(\partial_t,\nabla)u\right|,
  \end{equation}
in $\R^{n+1}$ and
  \begin{equation}\label{eq:assumptionH}
  \sup_{t\in\R}\int e^{4\lambda|x|}u^2\,dx<\infty,
  \end{equation}
  for some $\lambda >\lambda_0$, then $u\equiv0$.
\end{theorem}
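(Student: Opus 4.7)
The plan is to derive Theorem \ref{thm:wave} from a Carleman estimate for the elliptic operator $\partial_t^2+\Delta$ in $\R^{n+1}$ with a radial exponential weight in the spatial variables, tuned so that the assumed decay of $V$ and $W$ is exactly at the threshold to be absorbed. Since $u$ is merely bounded (not decaying) in $t$, I first introduce a smooth time-cutoff $\chi_R(t)$ with $\chi_R\equiv 1$ for $|t|\le R$ and $\chi_R\equiv 0$ for $|t|\ge 2R$, and work with $v_R=\chi_R u$. Then $(\partial_t^2+\Delta)v_R=\chi_R(\partial_t^2+\Delta)u+2\chi_R'\partial_t u+\chi_R'' u$, so the error from the cutoff is supported in the slab $R\le |t|\le 2R$, where the assumed finiteness of $\sup_t\int e^{4\lambda|x|}|u|^2\,dx$ (together with an $H^1$-analogue obtained from the equation itself) gives $R$-independent control.

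The core is a Carleman inequality of the type
\begin{equation*}
\bigl\|(1+|x|^2)^{\frac14+}e^{\lambda\phi(x)}f\bigr\|_{L^2(\R^{n+1})}
+\bigl\|(1+|x|^2)^{\frac14+}e^{\lambda\phi(x)}\nabla_{t,x}f\bigr\|_{L^2(\R^{n+1})}
\le C\bigl\|e^{\lambda\phi(x)}(\partial_t^2+\Delta)f\bigr\|_{L^2(\R^{n+1})},
\end{equation*}
valid for all $\lambda\ge\lambda_0(n,C)$ and all $f\in C_c^\infty$, where $\phi(x)$ is $|x|$ smoothed near the origin. One produces this in the standard way: conjugate by the weight to get $P_\lambda=e^{\lambda\phi}(\partial_t^2+\Delta)e^{-\lambda\phi}$, split $P_\lambda=S+A$ into symmetric and skew parts, and extract positivity from the commutator $[S,A]$; the radial choice of $\phi$ produces a gain of $\lambda/|x|$ from the Morawetz-type commutator, which after multiplying by the polynomial weight $(1+|x|^2)^{\frac14+}$ is the right size. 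Applied to $v_R$, the main term $|Vu|+|W\cdot(\partial_t,\nabla)u|$ on the right is estimated by $(1+|x|^2)^{-\frac14-}(|u|+|\nabla_{t,x}u|)$ and therefore absorbed into the left-hand side once $\lambda\ge\lambda_0$. The remaining commutator terms from $\chi_R',\chi_R''$ contribute only an $R$-independent constant. Letting $R\to\infty$ forces $(1+|x|^2)^{\frac14+}e^{\lambda\phi}u\equiv0$ globally, hence $u\equiv0$.

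The main obstacle is proving the Carleman inequality above \emph{with} the polynomial gain $(1+|x|^2)^{\frac14+}$ and with first-order control on $\nabla_{t,x}f$ on the left; the borderline exponent $\tfrac14+$ matching the decay of $V$ and $W$ leaves no room to waste. In particular, care is required because the weight $e^{\lambda|x|}$ is not smooth at the origin (handled by smoothing, plus a local unique-continuation argument near the origin), and because the skew part $A=-2\lambda\,\hat x\cdot\nabla_x-\lambda(n-1)/|x|$ produces a singular lower-order contribution that must be balanced against $[S,A]$. Once the Carleman estimate is in hand, absorption of $V$, absorption of $W$ (using the gradient term on the left), and the cutoff limit are routine, so the whole proof reduces to choosing the weight and computing the commutator carefully enough to see the claimed threshold $\alpha=\tfrac12$ appear.
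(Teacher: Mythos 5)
Your overall architecture (conjugation by a spatial exponential weight, symmetric/skew splitting of the conjugated operator, positivity of the commutator, absorption of $V$ and $W$ for large $\lambda$) is the same as the paper's, but there is a genuine gap at the central claim: the Carleman inequality with gain $(1+|x|^2)^{\frac14+}$ on both $f$ and $\nabla_{t,x}f$ for the weight $e^{\lambda\phi}$ with $\phi$ essentially equal to $|x|$. For $\phi(x)=|x|$ one has $D^2\phi=(I-\hat x\otimes\hat x)/|x|$, so the commutator term $4\lambda^3\,\nabla\phi\, D^2\phi\,\nabla\phi$ vanishes identically and $4\lambda\,\nabla\cdot D^2\phi\nabla$ controls only angular derivatives: there is no zero-order gain, no radial-derivative gain, and (since $\mathcal A$ involves only $x$-derivatives and commutes with $\partial_t^2$) no $|\partial_tf|^2$ gain at all. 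The ``gain of $\lambda/|x|$ from the Morawetz-type commutator'' you invoke is therefore absent exactly where you need it, and the proposed estimate is false for that weight. The paper repairs this in two ways you would have to reproduce: it replaces $|x|$ by the strictly convex weight $\phi(r)=3r-\int_1^r\frac{ds}{1+\log s}$, whose $\phi''(r)=\frac{1}{r(1+\log r)^2}$ yields a positive but only logarithmically-worse-than-$r^{-1}$ gain on $|f|^2$ and $|\nabla_xf|^2$ (still enough to absorb $|V|^2,|W|^2\lesssim r^{-1-}$ once $\lambda$ is large); and, crucially, it adds the extra multiplier $\psi=\delta\lambda\phi''$ to the monotone quantity $\langle\mathcal Af+\psi f,\partial_tf\rangle$, whose time derivative produces the term $\int\psi|\partial_tf|^2$ — the \emph{only} source of control on $\partial_tf$ and hence the only way to absorb the $W^0\partial_tu$ part of the perturbation. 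Your proposal contains no mechanism for either point.

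A second, lesser issue concerns localization. Your cutoff is in $t$ only, so $v_R$ is not compactly supported in $x$ and a Carleman inequality stated ``for all $f\in C_c^\infty$'' cannot be applied to it directly; at this borderline decay the spatial tail is not routine. The paper handles it with a two-step bootstrap: first a global finiteness bound (its Lemma \ref{lem:step2}, obtained because the boundary terms at $t=T_0,T_1$ in the monotonicity identity are uniformly bounded thanks to \eqref{eq:assumptionH} and elliptic regularity), and only then a spatial cutoff $\chi_R(x)$ whose errors, of size $R^{-2}\int_{R\le|x|\le2R}(|f|^2+|\nabla f|^2)$, are beaten by the left-hand side up to a factor $(1+\log R)^2/R\to0$ precisely because the global integral is already known to be finite. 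Your time cutoff is a workable alternative to the paper's choice of good time sequences $T_j$ — the errors from $\chi_R',\chi_R''$ are in fact $O(1/R)$, not merely $R$-independent as you state, and you do need them to vanish, since a uniform bound would only give finiteness of the weighted norm rather than $u\equiv0$ — but it does not substitute for the spatial localization argument, which is where the real passage from the a priori decay \eqref{eq:assumptionH} to the conclusion takes place.
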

\begin{remark}[Decay]\label{rem:1}
  Notice that the decay rate $1/{(1+|x|^2)^{\frac14+}}$ in \eqref{eq:assV} and \eqref{eq:assW} is in analogy with the one required in Theorem 2 in \cite{EKPV2} for the Schr\"odinger evolution case and fits with the results in \cite{C} and \cite{FH}.
\end{remark}
\begin{remark}[Regularity]\label{rem:2}
  The regularity assumption $u\in H^2_{loc}(\R\times\R^n)$ is required in order to justify both the left and right-hand sides of the inequality \eqref{eq:wave}, while the assumption 
  $u\in C^1(\R, H^1(\R^n))$ is needed to justify the integration by parts in Proposition \ref{prop:1} below. In fact, the result applies to all the distibutional solutions to
  \begin{equation*}
    \partial_t^2u+\triangle u= Vu+W\cdot(\partial_t,\nabla)u,
  \end{equation*}
 verifying \eqref{eq:assumptionH}.
\end{remark}
\begin{remark}[Repulsive perturbations]\label{rem:3}
A more general version of the previous theorem, including repulsive zero-order repulsive perturbations can also be proved in analogy with Theorem 2 in \cite{EKPV2}. More precisely, we can prove the same result for solutions of the inequality
 \begin{equation}\label{eq:wavegen}
    \left|\left(\partial_t^2+\triangle+V_2\right)u\right|\leq|Vu|+\left|W\cdot(\partial_t,\nabla)u\right|,
  \end{equation}
with $V$ and $W$ as in Theorem \ref{thm:wave} and where the potential $V_2$ verifies 
  \begin{equation}\label{eq:assV2}
  V_2\in L^\infty(\R\times\R^n) \ \text{and}\ 
   \sup_{t\in\R}\left(\partial_rV_2(t,x)\right)_-=\frac C{(1+|x|^2)^{\frac12+}},
  \end{equation}
  where $\partial_r$ denotes the radial derivative and $\left(\partial_rV_2\right)_-$ its negative part. See Appendix \ref{sec:app1} for the details.
\end{remark}
An analogous result when $V$ and $W$ 
decay more slowly at infinity is the following.
\begin{theorem}\label{thm:wave2}
  Let $V=V(t,x):\R\times\R^n\to\C$, $W=W(t,x):\R\times\R^n\to\C\times\C^n$ satisfy
  \begin{align}\label{eq:assV00}
   & V\in L^\infty(\R\times\R^n),\quad
   \sup_{t\in\R}|V(t,x)|\leq \frac C{(1+|x|^2)^{\frac\alpha2}},
   \\
   & W\in L^\infty(\R\times\R^n),\quad 
   \sup_{t\in\R}|W(t,x)|\leq \frac C{(1+|x|^2)^{\frac{1+\alpha}{6}}},
   \label{eq:assW00}
  \end{align}
  for some $C>0$ and $0\leq\alpha<\frac12$. Then, there is $\lambda_0=\lambda_0(C,n,\alpha)$ such that if $u=u(t,x)$ in $C^1(\R;H^1(\R^n))\cap H^2_{loc}(\R\times\R^n)$ verifies
  \begin{equation}\label{eq:wave00}
    \left|\left(\partial_t^2+\triangle\right)u\right|\leq|Vu|+\left|W\cdot(\partial_t,\nabla)u\right|,
  \end{equation}
in $\R^{n+1}$,
  \begin{equation}\label{eq:assumptionH00}
  \sup_{t\in\R}\int e^{4\lambda|x|^p}u^2\,dx<\infty,
  \end{equation}
  with $  p=\frac{4-2\alpha}{3}$ and for some $\lambda>\lambda_0$, then $u\equiv0$.
\end{theorem}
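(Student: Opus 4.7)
The plan is to adapt the proof of Theorem~\ref{thm:wave} by replacing the linear Carleman weight $|x|$ with $\phi(x)=|x|^p$, where $p=(4-2\alpha)/3$. Since $p\geq 1$ for $0\leq\alpha\leq 1/2$, the weight $\phi$ is radial and convex, with $|\nabla\phi|\sim|x|^{p-1}$ and $\phi''\sim|x|^{p-2}$. The choice of $p$ is forced by the identities
\begin{equation*}
 3p-4=-2\alpha,\qquad p-2=-\frac{2(1+\alpha)}{3},
\end{equation*}
so that the powers of $|x|$ produced at zeroth and first order by the Carleman commutator (roughly $|\nabla\phi|^2\phi''$ and $\phi''$ respectively) match exactly the squares of $V$ and $W$ in \eqref{eq:assV00}--\eqref{eq:assW00}.

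The main step is to prove the analogue, for the weight $e^{\tau\phi}$, of the Carleman estimate used for Theorem~\ref{thm:wave}: for some $\tau_0=\tau_0(n,\alpha)$, all $\tau\geq\tau_0$, and smooth $u$ compactly supported in $\R\times\{|x|\geq R_0\}$,
\begin{equation*}
 \tau^3\!\int |x|^{-2\alpha}e^{2\tau\phi}u^2\,dx\,dt + \tau\!\int |x|^{-\frac{2(1+\alpha)}{3}}e^{2\tau\phi}|(\partial_t,\nabla)u|^2\,dx\,dt \leq C\!\int e^{2\tau\phi}\bigl|(\partial_t^2+\triangle)u\bigr|^2\,dx\,dt.
\end{equation*}
This is derived by conjugating $\partial_t^2+\triangle$ with $e^{\tau\phi}$, splitting into its symmetric and antisymmetric parts, and computing the commutator; because $\phi$ does not depend on $t$, the $t$-variable contributes non-negatively and does not spoil the convexity in $x$. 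The principal positive terms carry the announced powers of $|x|$, and all remainders are absorbable for $\tau$ large.

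With the Carleman estimate in hand, inserting \eqref{eq:wave00} on the right yields $\int e^{2\tau\phi}(|V|^2u^2+|W|^2|(\partial_t,\nabla)u|^2)$; by \eqref{eq:assV00}--\eqref{eq:assW00} this is bounded by the same expression with $|V|^2$ replaced by $C|x|^{-2\alpha}$ and $|W|^2$ by $C|x|^{-2(1+\alpha)/3}$, both of which absorb into the left-hand side once $\tau$ is taken large enough. We then apply the inequality to $\chi u$, where $\chi$ is a radial cutoff vanishing for $|x|<R_0$ and equal to $1$ for $|x|\geq 2R_0$; the commutator $[\partial_t^2+\triangle,\chi]u$ is supported in the annulus $\{R_0\leq |x|\leq 2R_0\}$ and is uniformly controlled in $\tau$ by the $C^1(\R,H^1(\R^n))\cap H^2_{\mathrm{loc}}$-regularity of $u$. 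Hypothesis \eqref{eq:assumptionH00} keeps the right-hand side finite whenever $\tau\leq 2\lambda$; setting $\lambda_0:=\tau_0/2$ and letting $\tau\nearrow 2\lambda$ forces $u\equiv 0$ for $|x|$ sufficiently large. Classical elliptic unique continuation, available since $V,W$ are bounded, then propagates $u\equiv 0$ throughout $\R^{n+1}$.

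The hardest part is the Carleman estimate itself: although the scheme parallels Theorem~\ref{thm:wave}, the weights $|x|^{3p-4}$ and $|x|^{p-2}$ are sharp, and recovering them requires careful bookkeeping of every error term generated by the nonlinear radial weight so that no remainder carries a worse power of $|x|$ at infinity than the leading ones, and crucially exploits the strict positivity of $\phi''(r)=p(p-1)r^{p-2}$ for $p>1$. This positivity degenerates as $p\to 1^{+}$, i.e.\ $\alpha\to 1/2^{-}$, consistent with the strict restriction $\alpha<1/2$ in the statement and with Theorem~\ref{thm:wave} covering the linear-weight endpoint separately.
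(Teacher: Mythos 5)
Your identification of the weight $\phi\sim|x|^{p}$ and the exponent bookkeeping ($3p-4=-2\alpha$ for the zeroth-order term $|\nabla\phi|^{2}\phi''$, $p-2=-\tfrac{2(1+\alpha)}{3}$ for the gradient term $\phi''$) is exactly the point the paper isolates as the only change relative to Theorem \ref{thm:wave}: the paper replaces \eqref{eq:fi} by a $\mathcal C^{4}$ radial weight behaving like $r^{2}$ near the origin and like $r^{p}$ at infinity, keeps $\psi=\delta\lambda\phi''$, and reruns the whole scheme. However, the architecture of your argument diverges from the paper's in two places where it does not close. First, the endgame. You propose to apply a Carleman inequality to $\chi u$ with $\chi$ vanishing near the origin, and to conclude by ``letting $\tau\nearrow 2\lambda$.'' But $\tau$ is capped at $2\lambda$ by \eqref{eq:assumptionH00}, and $\lambda$ is a fixed number (merely required to exceed $\lambda_{0}(C,n,\alpha)$, which cannot depend on $u$); the inner commutator errors on $\{R_{0}\le|x|\le 2R_{0}\}$ carry the weight $e^{2\tau(2R_{0})^{p}}$ and do not become negligible relative to the left-hand side in any limit you are allowed to take. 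The usual ``send the Carleman parameter to infinity'' contradiction is simply unavailable here. The paper's mechanism is different: after establishing a \emph{global} space--time bound (the analogue of Lemma \ref{lem:step2}), it localizes with an \emph{outer} cutoff $\chi_{R}$ and observes that the cutoff errors carry $R^{-2}$ from $|\nabla\chi_{R}|^{2}$, which beats the growth $\sup_{R\le|x|\le2R}(\nabla\phi D^{2}\phi\nabla\phi)^{-1}\sim R^{2\alpha}$ and $\sup(\phi'')^{-1}\sim R^{2-p}$; letting $R\to\infty$ then forces the full integral to vanish, with $\lambda$ fixed throughout and no auxiliary unique continuation step.

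Second, the time direction. Waveguide solutions do not decay in $t$, so $\int_{\R}\int e^{2\tau\phi}|Vu|^{2}\,dx\,dt$ is in general infinite, and your Carleman inequality for functions compactly supported in $\R\times\{|x|\ge R_{0}\}$ cannot be applied to $\chi u$ without a time truncation whose errors you never control. This is the crux of the paper's proof, not a routine remark: Proposition \ref{prop:1} is a monotonicity identity on $[T_{0},T_{1}]$ whose only non-absorbable terms are the temporal boundary terms $\langle\mathcal Af+\psi f,\partial_{t}f\rangle\vert_{T_{0}}^{T_{1}}$; these are bounded uniformly in $T_{0},T_{1}$ because the conjugation weight $e^{\lambda\phi}$ is dominated by the hypothesis $e^{4\lambda|x|^{p}}$ together with elliptic regularity (\eqref{eq:caciotta}), which yields the finite global bound \eqref{eq:step2}, and one then extracts a sequence $T_{j}\to\pm\infty$ along which the localized boundary terms vanish (Lemma \ref{lem:boundarynew}). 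Without this step your right-hand side is not known to be finite and the absorption argument has nothing to absorb into. A correct write-up should therefore follow the paper's two-pass structure (global bound first, then spatial localization and $R\to\infty$), with your weight $|x|^{p}$ regularized to be $\mathcal C^{4}$ and quadratic near the origin so that the Poincar\'e-type absorption of the $\triangle^{2}\phi$ and $V$ terms near $x=0$ goes through as in Lemma \ref{lem:positivity}.
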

\begin{remark}
 Observe the analogy with Theorem 1 in \cite{EKPV2} and notice that the decay conditions \eqref{eq:assV00}, \eqref{eq:assW00} fit with the results by Meshkov \cite{M} for $\alpha=0$ and by Cruz-Sampedro \cite{C} for the rest of the range $\alpha\in[0,\frac12)$ in the stationary case.
\end{remark}
The proof of Theorem \ref{thm:wave2} is sketched in Appendix \ref{sec:app2} because it is analogous to the one for Theorem \ref{thm:wave}. In this case one can also consider repulsive zero-order perturbations to the equation \eqref{eq:wave00}, as in Remark \ref{rem:3} above. The details are omitted. The rest of the paper is devoted to the proof of Theorem \ref{thm:wave}.

\section{Monotonicity Lemma}\label{sec:carleman}

We prove here an abstract monotonicity result, which will be used later in order to obtain suitable Carleman estimates.

 In the following, the symbol $\mathcal S$ denotes a symmetric operator on $L^2(\R^n)$, and $\mathcal A$ a skew-symmetric one, both of them independent on $t$. Moreover, the brackets $[\cdot,\cdot]$ stay for the commutator of two operators, and given two functions $f=f(t,x), g=g(t,x)$, we denote 
 $\langle f,g\rangle=\int f\overline g\,dx$, the hermitian product in the $x$-variable. 
  
We can now state the main result of this section.

\begin{proposition}[Monotonicity Lemma]\label{prop:1}
  Let $\mathcal S$ be a symmetric operator and $\mathcal A$ be a skew-symmetric one, both independent on time.  Then, for sufficiently regular functions $f=f(t,x):\R^{1+n}\to\C$, $\psi=\psi(x):\R^{n}\to\R$, and for any $T_0,T_1\in\R$, the following estimate holds:  
  \begin{align}\label{eq:carleman1}
    &\frac12\int_{T_0}^{T_1}\int[\mathcal S,\mathcal A]f\,\overline f\,dx\,dt+
    \int_{T_0}^{T_1}\int\psi\left|\partial_tf\right|^2\,dx\,dt+
    \frac12\int_{T_0}^{T_1}\int|\mathcal Af|^2\,dx\,dt
    \\
    & +
    \Re\int_{T_0}^{T_1}\int(\mathcal S+\mathcal A)f\,\psi\overline f\,dx\,dt
    \nonumber
    \\
    &
    \leq
    \int_{T_0}^{T_1}\int\left|(\partial_t^2-(\mathcal S+\mathcal A))f\right|^2\,dx\,dt+
    \frac12\int_{T_0}^{T_1}\int\psi^2|f|^2\,dx\,dt
    \nonumber
    \\
    & +
    \left.\left|\langle \mathcal Af,\partial_tf\rangle+\langle\psi f,\partial_tf\rangle\right|\,
    \right\vert_{t=T_0}^{t=T_1}.
    \nonumber
  \end{align}
\end{proposition}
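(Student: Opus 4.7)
The plan is to expand $\iint|Lf|^2\,dx\,dt$, where $Lf:=\partial_t^2 f-(\mathcal{S}+\mathcal{A})f$, by separating the ``symmetric'' part $Gf:=\partial_t^2 f-\mathcal{S}f$ from the ``skew'' part $\mathcal{A}f$, and then to absorb the $\psi$-terms via a further integration by parts in $t$. Setting $F:=Lf$, the identity $F=G-\mathcal{A}f$ yields
\[
\iint|F|^2\,dx\,dt = \iint|G|^2 + \iint|\mathcal{A}f|^2 - 2\,\mathrm{Re}\iint G\,\overline{\mathcal{A}f}\,dx\,dt.
\]
The cross term splits as $-2\,\mathrm{Re}\iint\partial_t^2 f\,\overline{\mathcal{A}f}+2\,\mathrm{Re}\iint\mathcal{S}f\,\overline{\mathcal{A}f}$. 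Because $\mathcal{S}$ is symmetric and $\mathcal{A}$ is skew, $2\,\mathrm{Re}\langle\mathcal{S}f,\mathcal{A}f\rangle=\langle f,[\mathcal{S},\mathcal{A}]f\rangle$, and the commutator $[\mathcal{S},\mathcal{A}]$ is itself symmetric so this pairing is real. An integration by parts in $t$ on the other piece, together with the identity $2\,\mathrm{Re}\langle\partial_t f,\mathcal{A}\partial_t f\rangle=0$ (valid because $\mathcal{A}$ is $t$-independent and skew), leaves only the boundary contribution $-2\,\mathrm{Re}[\langle\partial_t f,\mathcal{A}f\rangle]_{T_0}^{T_1}$. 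Dropping the nonnegative $\tfrac{1}{2}\iint|G|^2$ then gives
\[
\tfrac{1}{2}\iint[\mathcal{S},\mathcal{A}]f\,\bar f + \tfrac{1}{2}\iint|\mathcal{A}f|^2 \le \tfrac{1}{2}\iint|F|^2 + \mathrm{Re}[\langle\partial_t f,\mathcal{A}f\rangle]_{T_0}^{T_1}.
\]

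Next, I would handle $\iint\psi|\partial_t f|^2 + \mathrm{Re}\iint(\mathcal{S}+\mathcal{A})f\,\psi\bar f$ by substituting $(\mathcal{S}+\mathcal{A})f=\partial_t^2 f-F$. A direct integration by parts in $t$ gives
\[
\mathrm{Re}\iint\partial_t^2 f\,\psi\bar f\,dx\,dt = \mathrm{Re}[\langle\psi f,\partial_t f\rangle]_{T_0}^{T_1} - \iint\psi|\partial_t f|^2\,dx\,dt,
\]
so the two $\iint\psi|\partial_t f|^2$ contributions cancel exactly, and there remains only $\mathrm{Re}[\langle\psi f,\partial_t f\rangle]_{T_0}^{T_1}-\mathrm{Re}\iint F\,\psi\bar f$. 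A single Cauchy--Schwarz/AM--GM bound $|\iint F\,\psi\bar f|\le\tfrac{1}{2}\iint|F|^2+\tfrac{1}{2}\iint\psi^2|f|^2$ then closes the estimate: the two halves of $\iint|F|^2$ combine into the full $\iint|Lf|^2$ on the right, and the two boundary pieces combine under the single modulus in the claimed form.

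The main obstacle, which is essentially bookkeeping, is the careful justification of the algebraic identities: checking that $[\mathcal{S},\mathcal{A}]$ is symmetric so that $\langle[\mathcal{S},\mathcal{A}]f,f\rangle$ is real, that all pairings such as $\langle\mathcal{S}f,\mathcal{A}f\rangle$ and $\langle\partial_t f,\mathcal{A}f\rangle$ are finite, and that the integrations by parts in $t$ and (implicitly, via the symmetry/skewness assumptions) in the $x$-variable are legitimate for the ``sufficiently regular'' class of $f$ alluded to in the statement. Once these are in place, everything else is a routine computation.
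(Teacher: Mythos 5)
Your proof is correct and amounts to the same computation as the paper's: the same commutator identity $2\Re\langle\mathcal{S}f,\mathcal{A}f\rangle=\langle[\mathcal{S},\mathcal{A}]f,f\rangle$, the same integrations by parts in $t$ against $\mathcal{A}f$ and $\psi f$ (using skewness of $\mathcal{A}$ to kill $\Re\langle\mathcal{A}\partial_tf,\partial_tf\rangle$), and a final Cauchy--Schwarz, yielding the boundary term as $\Re\langle\mathcal{A}f+\psi f,\partial_tf\rangle\big|_{T_0}^{T_1}$ exactly as the paper's own argument does. The only cosmetic difference is organizational: the paper differentiates $\Re\left(\langle\mathcal{A}f,\partial_tf\rangle+\langle\psi f,\partial_tf\rangle\right)$ in $t$ and applies Cauchy--Schwarz to both error pairings $\langle\mathcal{A}f,Lf\rangle$ and $\langle\psi f,Lf\rangle$ (each costing $\tfrac12\iint|Lf|^2$), whereas you produce the $\tfrac12\iint|\mathcal{A}f|^2$ term by completing the square in $\iint|Lf|^2$ and discarding $\iint|\partial_t^2f-\mathcal{S}f|^2$ --- the resulting inequalities coincide.
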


\begin{proof}
  Notice that $\Re\langle \mathcal Af,f\rangle=0$, since $\mathcal A$ is skew-symmetric; analogously, we have that 
  $\Re\langle \mathcal Af,\mathcal Sf\rangle=\frac12\langle[\mathcal S,\mathcal A]f,f\rangle$. Hence, the explicit computation of  
  \begin{equation}\label{eq:frontiera}
    \frac{d}{dt}\Re\left(\langle \mathcal Af,\partial_tf\rangle
    +\langle\psi f,\partial_tf\rangle\right)
  \end{equation}
  gives the following identity:
  \begin{align}\label{eq:1}
    &
    \frac12\langle[\mathcal S,\mathcal A]f,f\rangle+
    \langle\psi\partial_tf,\partial_tf\rangle+
    \langle \mathcal Af,\mathcal Af\rangle+
    \Re\langle\psi f,(\mathcal S+\mathcal A)f\rangle
    \\
    & 
    =
    -\Re\langle \mathcal Af,(\partial_t^2-(\mathcal S+\mathcal A))f\rangle-
    \Re\langle\psi f,(\partial_t^2-(\mathcal S+\mathcal A))f\rangle
    +\frac{d}{dt}\Re\left(\langle \mathcal Af,\partial_tf\rangle
    +\langle\psi f,\partial_tf\rangle\right).
    \nonumber
  \end{align}
  By Cauchy-Schwartz we obtain
  \begin{align}\label{eq:4}
    & 
    \left|\int_{T_0}^{T_1}\langle \mathcal Af,(\partial_t^2-(\mathcal S+\mathcal A))f\rangle\,dt\right|
    \\
    & \ \ 
    \leq\frac12\int_{T_0}^{T_1}\int|Af|^2\,dx\,dt
    +\frac12\int_{T_0}^{T_1}\int\left|(\partial_t^2-(\mathcal S+\mathcal A))f\right|^2\,dx\,dt.
    \nonumber
  \end{align}
  Analogously, we can estimate
  \begin{align}\label{eq:5}
      & \left|\int_{T_0}^{T_1}\langle\psi f,(\partial_t^2-(\mathcal S+\mathcal A))f\rangle\,dt\right|
      \\
      &\ \ 
     \leq
     \frac12\int_{T_0}^{T_1}\int\psi^2|f|^2\,dx\,dt
    +\frac12\int_{T_0}^{T_1}\int\left|(\partial_t^2-(\mathcal S+\mathcal A))f\right|^2\,dx\,dt.  
    \nonumber
    \end{align}
  The thesis now follows, after integrating identity \eqref{eq:1}, by estimates \eqref{eq:4}, \eqref{eq:5}.
\end{proof}
\begin{remark}
  We point out that \eqref{eq:1} is in fact a convexity argument. Indeed, notice that by denoting
 \begin{equation}\label{eq:H}
   H(t)=\langle\partial_tf,\partial_tf\rangle-\langle \mathcal Sf,f\rangle+\langle\psi f,f\rangle,
 \end{equation}
and differentiating it with respect to time, we get
 \begin{equation}\label{eq:Hdot}
   \frac{d}{dt}H(t)=
   2\Re\langle \mathcal Af,\partial_tf\rangle+2\Re\langle\psi f,\partial_tf\rangle
   +2\Re\langle\left(\partial_t^2-(\mathcal S+\mathcal A)\right)f,\partial_tf\rangle,
 \end{equation}
for $f$ regular enough. As a consequence, \eqref{eq:frontiera} is in fact a second derivative of $H(t)$ when $f$ is a solution to $(\partial_t^2-(\mathcal S+\mathcal A))f=0$. When this is not the case we treat, $\partial_t^2f-(\mathcal S+\mathcal A)f$, as an error term.
\end{remark}

\section{Proof of Theorem \ref{thm:wave}}\label{sec:proof1}
We now pass to the proof of Theorem  \ref{thm:wave} which is divided into some steps.

\subsection{Preliminaries}
Let $u$ be a solution of \eqref{eq:wave}. Denote by 
\begin{equation}\label{eq:f}
  f(t,x)=e^{\lambda\phi(x)}u(t,x),
  \qquad
  \lambda>0,
\end{equation}
where $\phi=\phi(x):\R^n\to\R$ is a sufficiently regular function to be chosen later.
It turns out that $f$ satisfies the following inequality
\begin{equation}\label{eq:feq}
\left|\left(\partial_t^2-(\mathcal S+\mathcal A)\right)f\right|\leq|Vf|+
\lambda\left|\left(\widetilde{W}\cdot\nabla\phi\right)f\right|+\left|W\cdot(\partial_t,\nabla)f\right|,
\end{equation}
where 
\begin{equation}\label{eq:wtilde}
  \widetilde W=\widetilde W(t,x):\R\times\R^n\to\R^n,
  \qquad
  \widetilde W=\left(W^1,\dots,W^{n}\right),
\end{equation}
\begin{equation}\label{eq:SA}
\mathcal S=-\triangle-\lambda^2|\nabla\phi|^2,
\qquad
\mathcal A=\lambda\left(2\nabla\phi\cdot\nabla+\triangle\phi\right).
\end{equation}
Notice that $\mathcal S$ is a symmetric operator, while $\mathcal A$ is a skew-symmetric one. Moreover, we can compute explicitly the commutator
\begin{equation}\label{eq:commutator}
  [\mathcal S,\mathcal A]=-\lambda\left(4\nabla\cdot D^2\phi\nabla-4\lambda^2\nabla\phi D^2\phi\nabla\phi
  +\triangle^2\phi\right),
\end{equation}
where $D^2\phi$ is the Hessian of $\phi$ and $\triangle^2\phi=\triangle(\triangle\phi)$ is its bi-Laplacian.

In addition, for a sufficiently regular function $\psi=\psi(x):\R^n\to\R$, integrating by parts we obtain
\begin{equation}\label{eq:S+A}
  \Re\int(\mathcal S+\mathcal A)f\,\psi\overline f\,dx=
  \int\psi|\nabla f|^2-\frac12\int(\triangle\psi)|f|^2-
  \lambda^2\int|\nabla\phi|^2\psi|f|^2-
  \lambda\int|f|^2\nabla\phi\cdot\nabla\psi.
\end{equation}

\subsection{Choice of the multipliers $\phi,\psi$}
We now choose two explicit multipliers $\phi$ and $\psi$. Let us consider, $\phi=\phi(|x|)$, a radial function of the form
\begin{equation}\label{eq:fi}
  \phi(r)=
  \begin{cases}
    a_1r^2+a_2r^4+a_3r^6+a_4r^8
    \quad
    r\in[0,1]
    \\
    3r-\int_1^r\frac{ds}{1+\log s}+a_5
    \quad
    r>1
  \end{cases}
\end{equation}
where $r:=|x|$,
for some constants $a_1,a_2,a_3,a_4,a_5\in\R$. We have the following:
\begin{lemma}\label{lem:fi}
  There exist $a_1,a_2,a_3a_4,a_5\in\R$ such that the function $\phi$ defined in \eqref{eq:fi} is strictly convex on compact sets of $\R^n$ and satisfies
  \begin{align}\label{eq:fi2}
    &
    \phi\in\mathcal C^4(\R^n);
    \qquad
    \phi(0)=0,\ \ \ \ \phi(r)>0\ \text{for}\ r>0;
    \\
    &\exists M>0:\phi(r)\leq Mr,\ \forall r\in[0,\infty);
    \label{eq:fi3}
  \end{align}
\end{lemma}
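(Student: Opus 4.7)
The plan is to determine the five constants by matching $\phi$ to be $C^4$ across the sphere $r=1$ between the inner polynomial $p(r) := a_1 r^2 + a_2 r^4 + a_3 r^6 + a_4 r^8$ and the outer expression $g(r) := 3r - \int_1^r \frac{ds}{1+\log s} + a_5$. Since $p$ contains only even powers of $r$, the function $p(|x|)$ is automatically $C^\infty$ on $\R^n$, and $g(|x|)$ is $C^\infty$ on $\{|x|>1\}$ because $(1+\log s)^{-1}$ is smooth on $(e^{-1},\infty)$. Thus the $C^4(\R^n)$ requirement reduces to matching the radial derivatives $p^{(k)}(1) = g^{(k)}(1)$ for $k=0,1,2,3,4$.

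Direct computation gives $g(1) = 3 + a_5$, $g'(1) = 2$, $g''(1) = 1$, $g'''(1) = -3$ and $g^{(4)}(1) = 14$, which produces a system of five linear equations in $(a_1,\dots,a_5)$. The first equation fixes $a_5$ in terms of the remaining unknowns, and the other four form a square linear system in $(a_1,a_2,a_3,a_4)$ which is easily checked to be nondegenerate and hence uniquely solvable. I would then carry out this arithmetic explicitly to read off the values and, crucially, confirm that the solution satisfies $a_1>0$.

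For strict convexity of $\phi(|x|)$ on compact subsets of $\R^n$, I would use the standard fact that the Hessian of a smooth radial function has eigenvalues $\phi''(r)$ in the radial direction and $\phi'(r)/r$ with multiplicity $n-1$ in the tangential directions. On $[1,\infty)$ one has $g''(r) = 1/(r(1+\log r)^2) > 0$ and $g'(r)/r = (3-(1+\log r)^{-1})/r > 0$, using $1+\log r\geq 1$ for $r\geq 1$. On $[0,1]$ both $p''(r)$ and $p'(r)/r$ are polynomials in $r^2$ whose value at $r=0$ equals $2a_1>0$ and whose values at $r=1$ match the positive quantities $1$ and $2$ respectively; positivity on the full interval reduces to a one-variable inspection of these explicit polynomials. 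Combined with $\phi(0)=p(0)=0$, the positivity $\phi(r)>0$ for $r>0$ then follows from the monotonicity $\phi'>0$.

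For the linear bound, on $[1,\infty)$ the identity $g'(r) = 3-(1+\log r)^{-1} \leq 3$ integrates to $\phi(r) \leq \phi(1) + 3(r-1)$, hence $\phi(r)\leq Mr$ for $r\geq 1$; on $[0,1]$ the expansion $\phi(r)/r = a_1 r + O(r^3)$ is bounded by continuity. The real obstacle in this scheme is not the linear algebra but verifying that the particular numerical solution obtained for $(a_1,\dots,a_4)$ keeps both $p''(r)$ and $p'(r)/r$ strictly positive throughout $[0,1]$: this positivity is not forced by the matching conditions themselves and has to be checked directly from the explicit polynomial coefficients.
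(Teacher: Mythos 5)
Your proposal is correct and is exactly the explicit-construction argument that the paper delegates to Appendix 7(b) of \cite{EKPV2}: match $p^{(k)}(1)=g^{(k)}(1)$ for $k\le 4$ (your values $g(1)=3+a_5$, $g'(1)=2$, $g''(1)=1$, $g'''(1)=-3$, $g^{(4)}(1)=14$ are right), solve the linear system, and check positivity. For the record, the system yields $a_1=\tfrac{103}{96}$, $a_2=\tfrac{9}{64}$, $a_3=-\tfrac{17}{96}$, $a_4=\tfrac{17}{384}$, and with $s=r^2$ one finds $48\,p''=103+81s-255s^2+119s^3$ and $48\,p'(r)/r=103+27s-51s^2+17s^3$, each of which increases and then decreases on $[0,1]$ and so is bounded below by its endpoint values ($48$ and $96$ respectively), which settles the positivity check you flagged as the remaining obstacle.
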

The proof of Lemma \ref{lem:fi} can be performed constructing $\phi$ explicitly in formula 
\eqref{eq:fi}, see Appendix 7 part (b) in \cite{EKPV2}.
Notice also that, by explicit computations, we have
\begin{equation}\label{eq:fieps4}
  \phi'(r)=
  \begin{cases}
    a_1r+o(r)
    \quad
    r\in[0,1]
    \\
    3-\frac{1}{1+\log r}
    \quad
    r>1,
  \end{cases}
\end{equation}
\begin{equation}\label{eq:fieps5}
  \phi''(r)=
  \begin{cases}
    a_1+o(1)
    \quad
    r\in[0,1]
    \\
    \frac{1}{r(1+\log r)^2}
    \quad
    r>1.
  \end{cases}
\end{equation}
Now, since $\phi$ is radial, we can write
\begin{equation}\label{eq:fi6}
  \nabla\phi D^2\phi\nabla\phi=(\phi''(r))(\phi'(r))^2;
\end{equation}
Hence, by \eqref{eq:fieps4}, \eqref{eq:fieps5}, and \eqref{eq:fi6}, we obtain the following estimate
\begin{equation}\label{eq:fi8}
  \nabla\phi D^2\phi\nabla\phi >
  \begin{cases}
    Cr^2
    \quad
    r\in[0,1]
    \\
     \frac{1}{r(1+\log r)^2}
     \quad
     r>1,
  \end{cases}
\end{equation}
for some constant $C>0$.
Moreover, since $\phi$ is strictly convex, we have
\begin{equation}\label{eq:stri}
  D^2\phi(r)\geq\phi''(r)Id,
  \qquad
  r\in[0,\infty).
\end{equation}
Also notice that
\begin{equation}\label{eq:fi9}
  \triangle^2\phi\in L^\infty(\R^n),
  \qquad
  |\triangle^2\phi|\leq\frac{C}{r^3}
  \quad
  (r>1).
\end{equation}
Finally,
we define
\begin{equation}\label{eq:psi}
  \psi(r)=\delta\lambda\phi''(r),
\end{equation}
for some positive constant $\delta>0$ to be chosen small enough in the sequel, and $\lambda$ the same as in assumption \eqref{eq:assumptionH}. Notice that $\psi\geq0$. 

We now prove a fundamental lemma, in which we obtain the positivity of the left-hand side of estimate \eqref{eq:carleman1}.
\begin{lemma}\label{lem:positivity}
  Let $\mathcal S,\mathcal A$ be defined by \eqref{eq:SA}, $\phi,\psi$ as above and $f$ be a sufficiently regular function. Then the following estimate holds
  \begin{equation}\label{eq:positivity}
    \int[\mathcal S,\mathcal A]f\,\overline f\,dx+\Re\int(\mathcal S+\mathcal A)f\,\psi\overline f\,dx
    \geq
    C\lambda\int\phi''|\nabla f|^2\,dx+C\lambda^3\int|f|^2\nabla\phi D^2\phi\nabla\phi\,dx,
  \end{equation}
  for some $C>0$.
\end{lemma}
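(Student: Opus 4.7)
The plan is to compute both integrals on the left of \eqref{eq:positivity} explicitly, isolate the two manifestly positive contributions claimed on the right, and show that all remaining error terms can be absorbed provided $\delta$ is small and $\lambda$ is sufficiently large. I would proceed in three stages.

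First, inserting the commutator formula \eqref{eq:commutator} and integrating by parts the principal term gives
\begin{equation*}
\int[\mathcal S,\mathcal A]f\,\overline f\,dx \;=\; 4\lambda\int D^2\phi\,\nabla f\cdot\nabla\overline f\,dx \;+\; 4\lambda^3\int\nabla\phi\,D^2\phi\,\nabla\phi\,|f|^2\,dx \;-\; \lambda\int(\triangle^2\phi)\,|f|^2\,dx,
\end{equation*}
whose first term is real and, by the convexity estimate \eqref{eq:stri}, is bounded below by $4\lambda\int\phi''|\nabla f|^2\,dx$. Choosing $\psi=\delta\lambda\phi''$ in \eqref{eq:S+A} produces
\begin{equation*}
\Re\int(\mathcal S+\mathcal A)f\,\psi\overline f\,dx \;=\; \delta\lambda\int\phi''|\nabla f|^2 \;-\; \tfrac{\delta\lambda}{2}\int(\triangle\phi'')|f|^2 \;-\; \delta\lambda^3\int|\nabla\phi|^2\phi''|f|^2 \;-\; \delta\lambda^2\int(\nabla\phi\cdot\nabla\phi'')|f|^2.
\end{equation*}

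The crucial algebraic cancellation exploits radiality: by \eqref{eq:fi6}, $|\nabla\phi|^2\phi''=(\phi')^2\phi''=\nabla\phi\,D^2\phi\,\nabla\phi$. Hence the two $\lambda^3$-contributions collapse into $(4-\delta)\lambda^3\int\nabla\phi D^2\phi\nabla\phi|f|^2$, which is strictly positive as soon as $\delta<4$ and is exactly the form of the $\lambda^3$-term on the right of \eqref{eq:positivity}. Similarly the two gradient terms combine into $(4+\delta)\lambda\int\phi''|\nabla f|^2$, giving the $\lambda$-term. It remains to dominate the three error contributions
\begin{equation*}
\lambda\int|\triangle^2\phi|\,|f|^2, \qquad \tfrac{\delta\lambda}{2}\int|\triangle\phi''|\,|f|^2, \qquad \delta\lambda^2\int|\nabla\phi\cdot\nabla\phi''|\,|f|^2.
\end{equation*}

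In the region $r>1$, direct computation from \eqref{eq:fi}--\eqref{eq:fieps5} together with \eqref{eq:fi9} yields $|\triangle^2\phi|,|\triangle\phi''|\lesssim r^{-3}$ and $|\nabla\phi\cdot\nabla\phi''|=|\phi'\phi'''|\lesssim r^{-2}$, all decaying strictly faster than the $\lambda^3$-density $\nabla\phi D^2\phi\nabla\phi\sim 1/(r(1+\log r)^2)$; the relevant ratios are $O(\lambda^{-2})$ and $O(\delta/\lambda)$, hence arbitrarily small once $\lambda$ is large. In the region $r\leq 1$ the polynomial form of $\phi$ gives the improved bound $|\nabla\phi\cdot\nabla\phi''|\lesssim r^2$, so the associated $\lambda^2$-error is absorbed into the $\lambda^3\int r^2|f|^2$ term; the remaining residual, of the form $C\lambda\int_{r\leq 1}|f|^2$, is handled by the Cauchy--Schwarz interpolation $\int|f|^2\leq \tfrac{a}{2}\int r^2|f|^2+\tfrac{1}{2a}\int|f|^2/r^2$ combined with Hardy's inequality $\int|f|^2/r^2\leq c_H\int|\nabla f|^2$, choosing $a\sim\lambda^{-2}$ so that one summand is absorbed by the $\lambda^3\int r^2|f|^2$ term and the other by the $\lambda\int\phi''|\nabla f|^2$ term, valid for $\lambda\geq\lambda_0$. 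The main obstacle is precisely this bounded-region absorption: the $\lambda^3$-gain vanishes like $r^2$ at the origin and cannot by itself control a uniformly bounded error, so the joint use of Hardy and a $\lambda$-weighted AM--GM is indispensable, with a standard low-dimensional modification of Hardy required when $n\leq 2$.
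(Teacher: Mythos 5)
Your decomposition is essentially the paper's: the same expansion of the commutator and of $\Re\int(\mathcal S+\mathcal A)f\,\psi\bar f$, the same lower bound for the Hessian term via \eqref{eq:stri}, and the same key observation that $\psi|\nabla\phi|^2=\delta\lambda\,\nabla\phi D^2\phi\nabla\phi$ so that the $\lambda^3$--terms combine into $(4-\delta)\lambda^3\int\nabla\phi D^2\phi\nabla\phi|f|^2$ (this is exactly \eqref{eq:1000parz}). The far--field absorption of $\triangle^2\phi$, $\triangle\psi$ and $\nabla\phi\cdot\nabla\psi$ also matches the paper. The only genuine divergence is how you treat the bounded zeroth--order residual $C\lambda\int_{r\le1}|f|^2$: the paper uses a localized Poincar\'e inequality on a ball of radius $\epsilon\sim\lambda^{-1}$ (absorbing the output into $A$ and into the $\lambda^3$--term on the annulus $\epsilon\le|x|\le2\epsilon$), whereas you use Hardy plus an AM--GM splitting.

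Two problems with your version of that last step. First, the parameter choice is inverted: with $a\sim\lambda^{-2}$ the second summand becomes $\lambda\cdot\frac1{2a}\int|f|^2/r^2\sim\lambda^{3}\int|f|^2/r^2\le c_H\lambda^{3}\int|\nabla f|^2$, which cannot be absorbed by the available $C\lambda\int\phi''|\nabla f|^2$ (near the origin $\phi''$ is merely bounded), so the absorption fails by a factor $\lambda^2$. You need $1\ll a\ll\lambda^2$, e.g.\ $a\sim\lambda$, which yields $\tfrac{\lambda^2}{2}\int_{r\le1}r^2|f|^2+\tfrac{c_H}{2}\int|\nabla f|^2$, both absorbable for $\lambda\ge\lambda_0$. (You also need to restrict the AM--GM to $r\le1$ and then pass to a global Hardy via a cutoff, since $\lambda a\int_{\R^n}r^2|f|^2$ is \emph{not} dominated by the $\lambda^3$--term at infinity, where $\nabla\phi D^2\phi\nabla\phi\sim 1/(r(1+\log r)^2)$; the cutoff produces an annulus term that must again be fed to the $\lambda^3$--term, essentially reproducing the paper's argument.) Second, Hardy's inequality $\int|f|^2/r^2\lesssim\int|\nabla f|^2$ requires $n\ge3$, while the theorem is stated for $n\ge1$; the ``standard low--dimensional modification'' you invoke is, in substance, the localized Poincar\'e inequality the paper uses, which works in every dimension. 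With those corrections your argument closes; as written, the stated choice of $a$ makes the key absorption false.
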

\begin{proof}
  By \eqref{eq:commutator} and \eqref{eq:S+A} we can write
  \begin{align}\label{eq:100}
    &
    \int[\mathcal S,\mathcal A]f\,\overline f\,dx+\Re\int(\mathcal S+\mathcal A)f\,\psi\overline  f\,dx
    \\
    &
    =
    4\lambda\int\nabla fD^2\phi\nabla\overline  f\,dx+
    4\lambda^3\int|f|^2\nabla\phi D^2\phi\nabla\phi\,dx
    -\lambda\int|f|^2\triangle^2\phi\,dx
    \nonumber
    \\
    & \ \ \ 
    +\int\psi|\nabla f|^2\,dx-\frac12\int|f|^2\triangle\psi\,dx-
    \lambda^2\int|f|^2\psi|\nabla\phi|^2\,dx-\lambda\int|f|^2\nabla\phi\cdot\nabla\psi\,dx
    \nonumber
    \\
    & =:
    A+B+C+D+E+F+G.
    \nonumber
  \end{align}
  Notice that $D\geq0$, so that we can neglect this term in \eqref{eq:100}.
  
  By \eqref{eq:stri} we have
  \begin{equation}\label{eq:azz1}
    A = 4\lambda\int\nabla fD^2\phi\nabla\overline  f\,dx
    \geq C\lambda\int\phi''|\nabla f|^2\,dx,
  \end{equation}
  for some $C>0$. Now notice that, by \eqref{eq:fieps4}, \eqref{eq:fi8} and \eqref{eq:psi} we easily obtain
  \begin{equation}\label{eq:1000parz}
    \lambda\nabla\phi D^2\phi\nabla\phi-\psi|\nabla\phi|^2
    \geq c\nabla\phi D^2\phi\nabla\phi,
  \end{equation}
  for some $c>0$, and
  all $x\in\R^n$, if $\delta >0$ is chosen to be small enough in \eqref{eq:psi}; as a consequence, we have
  \begin{equation}\label{eq:nuova}
    B+F\geq c\lambda^3\int|f|^2\nabla\phi D^2\phi\nabla\phi\,dx,
  \end{equation}
  for $\triangle$ sufficiently small.
 We now pass to the term $C$ in identity \eqref{eq:100}.  
 Notice that $B$ vanishes, close to the origin, by \eqref{eq:fi8}.
 Let us now write
  \begin{equation}\label{eq:130}
    C=-\lambda\int_{|x|\leq\epsilon}|f|^2\triangle^2\phi\,dx   
    -\lambda\int_{|x|\geq\epsilon}|f|^2\triangle^2\phi\,dx=:C_1+C_2.
  \end{equation}
  For the term outside of the ball, since $\triangle^2\phi$ decays at infinity faster than $1/r(1+\log r)^2$, we have by \eqref{eq:fi8} that
  \begin{equation}\label{eq:140}
    B+C_2\geq
    c\lambda^3\int_{|x|\geq\epsilon}|f|^2\nabla\phi D^2\phi\nabla\phi\,dx,
  \end{equation}
  for some $c>0$.
  For the term inside the ball, we use a localized version of the Poincar\'e inequality: take 
  $\chi\in C^\infty(\R^n)$ such that $\chi\equiv1$, for $|x|\leq\epsilon$, $supp\chi\subset\{|x\leq2\epsilon\}$, and $|\nabla\chi|\leq c/\epsilon$, to estimate
  \begin{align}\label{eq:150}
    |C_1| &
    \leq
    \lambda\|\triangle^2\phi\|_{L^\infty}\int_{|x|\leq2\epsilon}\chi^2|f|^2\,dx
    \leq
    c\epsilon^2\lambda\|\triangle^2\phi\|_{L^\infty}\int_{|x|\leq2\epsilon}
    |\nabla(\chi f)|^2\,dx
    \\
    &
    \leq
    c\epsilon^2\lambda\|\triangle^2\phi\|_{L^\infty}\int_{|x|\leq2\epsilon}
    |\nabla f|^2\,dx+c\lambda\|\triangle^2\phi\|_{L^\infty}\int_{\epsilon\leq|x|\leq2\epsilon}
    |f|^2\,dx,
    \nonumber
  \end{align}
  for some $c>0$;
  consequently, we have
  \begin{equation}\label{eq:160}
    A+B+C_1\geq
    C\lambda\int_{|x|\leq2\epsilon}\phi''|\nabla f|^2\,dx+C\lambda^3\int
    _{\epsilon\leq|x|\leq2\epsilon}|f|^2\nabla\phi D^2\phi\nabla\phi\,dx,
  \end{equation}
  for  $\epsilon>0$ sufficiently small, depending on $\lambda$, and some constant $C>0$.
  In conclusion
  \begin{equation}\label{eq:azz2}
    A+B+C\geq c(A+B),
  \end{equation}
  for some $c>0$, if $\epsilon$ is chosen to be small enough, depending on $\lambda>0$.
  In a completely analogous way we get,  thanks to \eqref{eq:fieps4} and \eqref{eq:psi}, that
  \begin{equation}\label{eq:azz2222}
  A+B+E+G\geq c(A+B),
  \end{equation}
  for some $c>0$, and this completes the proof.
\end{proof}

\subsection{Proof of Theorem \ref{thm:wave}}\label{subsec:proofwave}

Let $f$ be as in \eqref{eq:f}, with $\phi,\psi$ given by \eqref{eq:fi}, \eqref{eq:psi}.  The first step is to prove a global (in $t$ and $x$) estimate, assuming \eqref{eq:assumptionH}. 
\begin{lemma}\label{lem:step2}
  Let $\phi,\psi$ given by \eqref{eq:fi}, \eqref{eq:psi},  $\mathcal S$ and $\mathcal A$ be as in \eqref{eq:SA}, and assume \eqref{eq:assV}, \eqref{eq:assW}. Then, there exists a 
  $\lambda_0>0$ large enough, depending only $V,W$, such that, if \eqref{eq:assumptionH} holds, for 
  $\lambda>\lambda_0$, then the following estimate holds:
 \begin{equation}\label{eq:step2}
    \int_{-\infty}^{+\infty}\int(\psi\left|\partial_tf\right|^2
    +\lambda\phi''|\nabla f|^2+\lambda^3|f|^2\nabla\phi D^2\phi\nabla\phi)\,dx\,dt
    \leq C,    
  \end{equation}
  for some constant $0<C=C(\lambda)$.
\end{lemma}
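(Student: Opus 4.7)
My plan is to apply Proposition \ref{prop:1} to $f=e^{\lambda\phi}u$ on the slab $[T_0,T_1]\times\R^n$, with $\mathcal{S},\mathcal{A}$ from \eqref{eq:SA} and $\psi=\delta\lambda\phi''$. By Lemma \ref{lem:positivity}, the first and fourth summands on the LHS of \eqref{eq:carleman1} combine to produce $c\lambda\int_{T_0}^{T_1}\!\!\int\phi''|\nabla f|^2 + c\lambda^3\int_{T_0}^{T_1}\!\!\int|f|^2\nabla\phi D^2\phi\nabla\phi$, while $\int_{T_0}^{T_1}\!\!\int\psi|\partial_t f|^2$ is already present. Thus the LHS dominates the integrand of \eqref{eq:step2}.

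For the RHS I use \eqref{eq:feq}, obtaining
\begin{equation*}
|(\partial_t^2-\mathcal S-\mathcal A)f|^2\lesssim|V|^2|f|^2+\lambda^2|W|^2|\nabla\phi|^2|f|^2+|W|^2(|\partial_t f|^2+|\nabla f|^2).
\end{equation*}
The decay $|V|^2,|W|^2\lesssim(1+|x|^2)^{-1/2-\epsilon}$ from \eqref{eq:assV}--\eqref{eq:assW}, matched against the lower bounds from \eqref{eq:fieps4}, \eqref{eq:fieps5}, \eqref{eq:fi8}, i.e.\ $\phi'',\nabla\phi D^2\phi\nabla\phi\gtrsim(r(1+\log r)^2)^{-1}$ with $|\nabla\phi|\lesssim 1$ at infinity, and $\phi''\gtrsim 1$, $\nabla\phi D^2\phi\nabla\phi\gtrsim r^2$, $|\nabla\phi|\sim r$ near the origin, shows that each error is pointwise a factor $\lambda^{-k}$ ($k\ge 1$) smaller than one of the positive terms at infinity; near the origin the delicate term $|V|^2|f|^2$ is controlled by $\lambda\phi''|\nabla f|^2\sim\lambda|\nabla f|^2$ via a local Hardy/Poincar\'e inequality. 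The $\tfrac12\psi^2|f|^2=\tfrac12\delta^2\lambda^2(\phi'')^2|f|^2$ contribution from \eqref{eq:carleman1} is absorbed by $\lambda^3|f|^2\nabla\phi D^2\phi\nabla\phi$ for $\delta>0$ small. Hence, for $\lambda\ge\lambda_0$ and $\delta$ small depending only on the constants in \eqref{eq:assV}, \eqref{eq:assW}, every error term is absorbed into the LHS.

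It remains to bound the boundary term $\bigl|\langle\mathcal A f,\partial_t f\rangle+\langle\psi f,\partial_t f\rangle\bigr|$ at $t=T_0,T_1$: since $|\nabla\phi|,|\triangle\phi|,\phi''\in L^\infty(\R^n)$, it is dominated pointwise in $t$ by $C(\lambda)\bigl(\|f(t)\|_{H^1}^2+\|\partial_t f(t)\|_{L^2}^2\bigr)$. Using \eqref{eq:fi3}, the regularity $u\in C^1(\R,H^1)\cap H^2_{\mathrm{loc}}$, and the decay \eqref{eq:assumptionH}, this quantity is uniformly bounded in $t$ once $\lambda_0$ is taken large enough. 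Sending $T_0\to-\infty,\,T_1\to+\infty$ then yields \eqref{eq:step2}. The step I expect to be the main obstacle is precisely this last one: reconciling the growth of the Carleman weight $e^{\lambda\phi}$, which from \eqref{eq:fi} behaves like $e^{3\lambda|x|}$ at infinity, with the weaker decay $e^{-2\lambda|x|}$ encoded in \eqref{eq:assumptionH}. If a direct uniform-in-$t$ bound on the boundary term fails, one instead averages in $t$ to select a sequence $T_k\to\pm\infty$ along which the boundary term vanishes, exploiting the quantity $H(t)$ defined in \eqref{eq:H} and its derivative \eqref{eq:Hdot}.
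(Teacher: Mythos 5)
Your overall strategy coincides with the paper's: apply Proposition \ref{prop:1} on $[T_0,T_1]$ to $f=e^{\lambda\phi}u$, use Lemma \ref{lem:positivity} to make the left-hand side of \eqref{eq:carleman1} dominate the integrand of \eqref{eq:step2}, absorb the $|V|^2|f|^2$, $\lambda^2|\widetilde W\cdot\nabla\phi|^2|f|^2$, $|W|^2(|\partial_tf|^2+|\nabla f|^2)$ and $\psi^2|f|^2$ errors into the positive terms for $\lambda$ large (with a Poincar\'e inequality near the origin for the $V$-term), and let $T_0\to-\infty$, $T_1\to+\infty$. The absorption step is carried out exactly as in the paper and your pointwise comparisons against $\phi''$ and $\nabla\phi D^2\phi\nabla\phi$ are correct.

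The one genuine gap is in your treatment of the boundary bracket $\langle\mathcal Af,\partial_tf\rangle+\langle\psi f,\partial_tf\rangle$. You claim it is uniformly bounded in $t$ ``using the regularity $u\in C^1(\R,H^1)$ and the decay \eqref{eq:assumptionH}'', but $C^1(\R,H^1)$ only gives \emph{unweighted} control of $\nabla u$ and $\partial_t u$, while the bracket involves $\int e^{2\lambda\phi}\left(|\nabla u|^2+|\partial_tu|^2\right)w\,dx$ with an exponentially growing weight; the hypothesis \eqref{eq:assumptionH} concerns $u$ alone. The missing ingredient is the elliptic-regularity (Caccioppoli-type) upgrade that the paper states as \eqref{eq:caciotta}: since $u$ solves the differential inequality \eqref{eq:wave} with bounded coefficients, interior estimates on unit balls (\cite{GT}, Corollary 6.3) transfer the weighted $L^2$ decay of $u$ to $\nabla u$ and $\partial_tu$, and only then is $\sup_t|\langle\mathcal Af+\psi f,\partial_tf\rangle|<\infty$. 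Your proposed fallback --- averaging in $t$ to select times $T_k$ along which the boundary term vanishes --- is circular at this stage: selecting such a sequence requires knowing that the space-time integral in \eqref{eq:step2} (or of the boundary integrand) is already finite, which is precisely the conclusion of the lemma; the paper only deploys that averaging argument \emph{after} Lemma \ref{lem:step2} is established, to obtain \eqref{eq:boundary10}. Your instinct that the matching of the weight $e^{\lambda\phi}$ against the exponent in \eqref{eq:assumptionH} is the delicate point is correct, but the resolution is the elliptic regularity step, not time-averaging.
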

\begin{proof}
By standard elliptic regularity, since $u$ satisfies \eqref{eq:wave}, and $V\in L^\infty$, \eqref{eq:assumptionH} implies that
\begin{equation}\label{eq:caciotta}
  \sup_{t\in\R}\int e^{2\lambda|x|}\left(\left|\partial_tu\right|^2+\left|\nabla u\right|^2+|u|^2\right)\,dx<\infty
\end{equation}
(see e.g. \cite{GT}, Corollary 6.3). Hence
by \eqref{eq:caciotta}, \eqref{eq:SA} and \eqref{eq:psi}, we have
\begin{equation*}
  \sup_{t\in\R}\left|\langle \mathcal Af+\psi f,\partial_tf\rangle\right|<\infty.
\end{equation*}
Now, by Proposition \ref{prop:1} and Lemma \ref{lem:positivity}  we can estimate
  \begin{align}\label{eq:control}
    &
    \int_{T_0}^{T_1}\int \psi\left|\partial_tf\right|^2\,dx\,dt+
    \lambda\int_{T_0}^{T_1}\int \phi''|\nabla f|^2\,dx\,dt
    +
    \lambda^3\int_{T_0}^{T_1}\int|f|^2\nabla\phi D^2\phi\nabla\phi\,dx\,dt
    \\
    &
    \leq
    C\int_{T_0}^{T_1}\int \left(|V|^2+\lambda^2
    \left|\widetilde W\cdot\nabla\phi\right|^2\right)|f|^2\,dx\,dt+
    +C\int_{T_0}^{T_1}\int\left|W\cdot(\partial_t,\nabla)f\right|^2\,dx\,dt
    \nonumber
    \\
    & \ \ \ \ 
    C\int_{T_0}^{T_1}\int \psi^2|f|^2\,dx\,dt
        +
    C\Re\left.\left(\langle Af,\partial_tf\rangle+\langle\psi f,\partial_tf\rangle\right)
    \right\vert_{t=T_0}^{t=T_1}
    \nonumber
    \\
    &
   \leq
    C\int_{T_0}^{T_1}\int \left(|V|^2+\lambda^2
    \left|\widetilde W\cdot\nabla\phi\right|^2\right)|f|^2\,dx\,dt+
    +C\int_{T_0}^{T_1}\int\left|W\cdot(\partial_t,\nabla)f\right|^2\,dx\,dt
    \nonumber
    \\
    & \ \ \ \ 
    C\int_{T_0}^{T_1}\int \psi^2|f|^2\,dx\,dt
        +
    C,
    \nonumber
  \end{align}
  for any $T_0,T_1>0$, and for some $C>0$.
  
  By assumption \eqref{eq:assW}, using \eqref{eq:fieps4} and \eqref{eq:fi8} we see that
  \begin{align}\label{eq:200}
    & 
    \lambda^3\int_{T_0}^{T_1}\int|f|^2\nabla\phi D^2\phi\nabla\phi\,dx\,dt
    -
    C\lambda^2\int_{T_0}^{T_1}\int
    \left|\widetilde W\cdot\nabla\phi\right|^2|f|^2\,dx\,dt
    \\
    &
    \geq
    C_1\lambda^3\int_{T_0}^{T_1}\int|f|^2\nabla\phi D^2\phi\nabla\phi\,dx\,dt,
    \nonumber
  \end{align}
  for some constant $C_1>0$ and $\lambda>\lambda_0>0$, with $\lambda_0$ large enough, depending only on $W$. 
  For the term containing $V$, we need to separate the integrals close and far away from the origin. Let us write
  \begin{equation*}
    \int_{T_0}^{T_1}\int|V|^2|f|^2\,dx\,dt=
    \int_{T_0}^{T_1}\int_{|x|\leq\epsilon}|V|^2|f|^2\,dx\,dt
    +\int_{T_0}^{T_1}\int_{|x|\geq\epsilon}|V|^2|f|^2\,dx\,dt;
  \end{equation*}
  then, arguing as in Lemma \ref{lem:positivity}, by the Poincar\'e inequality and assumption 
  \eqref{eq:assV} we easily obtain
  \begin{align}\label{eq:200200}
    &
    \lambda\int_{T_0}^{T_1}\int \phi''|\nabla f|^2\,dx\,dt
    +\lambda^3\int_{T_0}^{T_1}\int|f|^2\nabla\phi D^2\phi\nabla\phi\,dx\,dt
    -C\int_{T_0}^{T_1}\int|V|^2|f|^2\,dx\,dt
    \\
    &
    \geq
    C_2\lambda\int_{T_0}^{T_1}\int \phi''|\nabla f|^2\,dx\,dt
    +C_2\lambda^3\int_{T_0}^{T_1}\int|f|^2\nabla\phi D^2\phi\nabla\phi\,dx\,dt,
    \nonumber
  \end{align}
  for some $C_2>0$ and $\lambda>\lambda_0>0$, with $\lambda_0$ large enough, depending only on $V$. 
Analogously, by assumption \eqref{eq:assW} we can estimate
  \begin{align}\label{eq:200bis}
    &
    \int_{T_0}^{T_1}\int \psi\left|\partial_tf\right|^2\,dx\,dt+
    \lambda\int_{T_0}^{T_1}\int \phi''|\nabla f|^2\,dx\,dt
    -C\int_{T_0}^{T_1}\int\left|W\cdot(\partial_t,\nabla)f\right|^2\,dx\,dt
    \\
    &
    \geq
    C_3\int_{T_0}^{T_1}\int \psi\left|\partial_tf\right|^2\,dx\,dt+
    C_3\lambda\int_{T_0}^{T_1}\int \phi''|\nabla f|^2\,dx\,dt,
    \nonumber
  \end{align}
for some constant $C_3>0$ and  $\lambda>\lambda_0>0$, with $\lambda_0$ large enough, depending only on $W$.
  Finally, write 
  \begin{equation*}
    \int_{T_0}^{T_1}\int\psi^2|f|^2\,dx\,dt
    =
    \int_{T_0}^{T_1}\int_{|x|\leq\epsilon}\psi^2|f|^2\,dx\,dt+
    \int_{T_0}^{T_1}\int_{|x|\geq\epsilon}\psi^2|f|^2\,dx\,dt,
  \end{equation*}
  and choose $\epsilon=\lambda^{-\frac12}$.
  By \eqref{eq:fi8}, \eqref{eq:psi} and the Poincar\'e inequality as above we obtain, for $\delta$ sufficiently small in \eqref{eq:psi},
  \begin{align}\label{eq:300}
    & 
    \lambda\int_{T_0}^{T_1}\int \phi''|\nabla f|^2\,dx\,dt+
    \lambda^3\int_{T_0}^{T_1}\int|f|^2\nabla\phi D^2\phi\nabla\phi\,dx\,dt
    -
    C\int_{T_0}^{T_1}\int\psi^2|f|^2\,dx\,dt
    \\
    &
    \geq
    C_4\lambda\int_{T_0}^{T_1}\int \phi''|\nabla f|^2\,dx\,dt+
    C_4\lambda^3\int_{T_0}^{T_1}\int|f|^2\nabla\phi D^2\phi\nabla\phi\,dx\,dt,
    \nonumber
  \end{align}
  for some $C_4>0$.
 The proof now follows by \eqref{eq:control}, \eqref{eq:200}, \eqref{eq:200200}, \eqref{eq:200bis} and \eqref{eq:300},
  since $T_0,T_1$ are arbitrary.
\end{proof}

Lemma \ref{lem:step2} shows that, if \eqref{eq:assumptionH} holds for $\lambda$ large enough, depending on $V,W$, then we get the stronger global estimate \eqref{eq:step2}. Moreover, 
\eqref{eq:step2} implies that there exists a sequence of times $T_j$, $j\in\mathbb Z$, with $T_j\to\pm\infty$, as $j\to\pm\infty$, such that
\begin{equation}\label{eq:boundary10}
  \lim_{j\to\pm\infty}
  \left.\int(\psi\left|\partial_tf\right|^2
    +\lambda\phi''|\nabla f|^2+\lambda^3|f|^2\nabla\phi D^2\phi\nabla\phi)\,dx
    \right\vert_{t=T_j}=0.
\end{equation}

Our next step is to prove that in fact we have
\begin{equation*}
   \int_{-\infty}^{+\infty}\int(\psi\left|\partial_tf\right|^2
    +\lambda\phi''|\nabla f|^2+\lambda^3|f|^2\nabla\phi D^2\phi\nabla\phi)\,dx\,dt
    =0.
\end{equation*}
In order to this, we need to apply again Proposition \ref{prop:1}, for a suitable localization of $f$, and use what we have already proved. To this aim, let $R>0$ and $\chi_R\in\mathcal C^{\infty}(\R^n)$ be such that
\begin{equation*}
  \left.\chi_R\right\vert_{|x|\leq R}\equiv1,
  \qquad
  \left.\chi_R\right\vert_{|x|\geq2R}\equiv0,
  \qquad
  \left|\nabla\chi_R\right|\leq\frac2R,
  \qquad
  |\triangle\chi_R|\leq\frac2{R^2}.
\end{equation*}
Moreover, denote by $f_R:=\chi_Rf=\chi_Re^{\lambda\phi}u$, and notice by \eqref{eq:feq} that $f_R$ satisfies
\begin{align}\label{eq:fr10}
\left|\left(\partial_t^2-(\mathcal S+\mathcal A)\right)f_R\right|\leq
&
|Vf_R|+
\lambda\left|\left(\widetilde{W}\cdot\nabla\phi\right)f_R\right|+\left|W\cdot(\partial_t,\nabla)f_R\right|
\\
&
+\left|f\left(\triangle\chi_R-2\lambda\nabla\phi\cdot\nabla\chi_R-
\widetilde W\cdot\nabla\chi_R\right)\right|+2\left|\nabla\chi_R\cdot\nabla f\right|,
\nonumber
\end{align}
where $\mathcal S$ and $\mathcal A$ are given by \eqref{eq:SA}. We first prove a fundamental lemma about the boundary terms in \ref{prop:1}.
\begin{lemma}\label{lem:boundarynew}
  Let $R>1$, and $T_j$ be the sequence in \eqref{eq:boundary10}. Then
  \begin{equation}\label{eq:boundarynew}
    \lim_{j\to\pm\infty}\left|\langle \mathcal Af_R(T_j),\partial_tf_R(T_j)\rangle+
    \langle\psi f_R(T_j),\partial_tf_R(T_j)\rangle\right|=0.
  \end{equation}
\end{lemma}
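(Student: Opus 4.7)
The plan is to reduce both inner products in \eqref{eq:boundarynew} to quantities of the form (something uniformly bounded in $t$) $\times\|\partial_tf_R(T_j)\|_{L^2}$, and then to use the sequence property \eqref{eq:boundary10} to show that this last factor tends to zero along $T_j$. Since $\operatorname{supp}\chi_R\subset\{|x|\le 2R\}$ and $\phi\in C^4$ is strictly convex on compact sets, on this ball there exist constants $0<c_R<C_R$ (depending on $R$, and in places on $\lambda$) such that
\[
c_R\le\phi''(|x|)\le C_R,\qquad |\nabla\phi|+|\triangle\phi|+\psi\le C_R,\qquad |x|\le 2R.
\]
The first step is simply to record these bounds from the explicit expressions \eqref{eq:fieps4}--\eqref{eq:fieps5} and from the definition $\psi=\delta\lambda\phi''$.

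Next I would establish the uniform control
\[
\sup_{t\in\R}\bigl(\|f_R(t)\|_{L^2}^2+\|\nabla f_R(t)\|_{L^2}^2\bigr)\le K(R,\lambda)<\infty.
\]
For this, I invoke the elliptic regularity consequence \eqref{eq:caciotta} (valid under \eqref{eq:assumptionH}), which gives uniform-in-$t$ control of $\|u\|_{H^1(|x|\le 2R)}$ and $\|\partial_tu\|_{L^2(|x|\le 2R)}$. Together with the pointwise bounds $|f_R|\le e^{\lambda\phi}|u|$ and $|\nabla f_R|\le e^{\lambda\phi}\bigl(|\nabla u|+\lambda|\nabla\phi||u|+|\nabla\chi_R||u|\bigr)$, and the fact that $e^{\lambda\phi}\le e^{C\lambda R}$ on $\{|x|\le 2R\}$, this produces the required uniform bound. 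On the other hand, because $\psi=\delta\lambda\phi''\ge c_R\delta\lambda>0$ on $\operatorname{supp}\chi_R$, the limit \eqref{eq:boundary10} immediately yields
\[
\|\partial_tf_R(T_j)\|_{L^2}^2\le\int_{|x|\le 2R}|\partial_tf(T_j)|^2\,dx\le(c_R\delta\lambda)^{-1}\int\psi|\partial_tf(T_j)|^2\,dx\longrightarrow 0.
\]

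To conclude, I expand $\mathcal A f_R=\lambda(2\nabla\phi\cdot\nabla f_R+\triangle\phi\,f_R)$ and apply Cauchy--Schwarz to each of the resulting summands in $|\langle\mathcal A f_R,\partial_tf_R\rangle|+|\langle\psi f_R,\partial_tf_R\rangle|$. Each is controlled by $\lambda C_R$ times one of $\|f_R\|_{L^2}$ or $\|\nabla f_R\|_{L^2}$, times $\|\partial_tf_R(T_j)\|_{L^2}$; the first two factors are uniformly bounded by the previous step, so the whole expression is at most $K'(R,\lambda)\|\partial_tf_R(T_j)\|_{L^2}\to 0$, which gives \eqref{eq:boundarynew}. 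The only mildly delicate point is that \eqref{eq:assumptionH} only gives global $L^2$-control of $u$ against the weight $e^{2\lambda|x|}$, whereas $f$ carries the larger weight $e^{\lambda\phi(r)}$ with $\phi(r)\sim 3r$ at infinity, so a global $L^2$ bound on $f$ is unavailable; however every estimate we need is localized to $\{|x|\le 2R\}$, where this exponential weight is harmless and is absorbed into the $R$-dependent constants.
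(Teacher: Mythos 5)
Your argument is correct, and its skeleton is the same as the paper's: localize to $\{|x|\le 2R\}$, where $\psi=\delta\lambda\phi''$ and $\phi''$ are bounded above and below by positive $R$-dependent constants, apply Cauchy--Schwarz, and evaluate at $t=T_j$ using \eqref{eq:boundary10}. The one genuine difference is in how the non-vanishing factors are handled. You split the boundary term as (uniformly bounded in $t$)$\,\times\,\|\partial_tf_R(T_j)\|_{L^2}$, obtaining the uniform bound on $\|f_R(t)\|_{L^2}+\|\nabla f_R(t)\|_{L^2}$ from the elliptic-regularity estimate \eqref{eq:caciotta} together with the fact that $e^{2\lambda\phi}$ is just an $R$-dependent constant on $\operatorname{supp}\chi_R$; you then only need the $\psi|\partial_tf|^2$ portion of \eqref{eq:boundary10} to vanish. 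The paper instead uses Cauchy--Schwarz in the form $ab\le\frac12(a^2+b^2)$ so that every term becomes a squared $L^2$-norm, controls $\int|f_R|^2$ by $R^2\int|\nabla f|^2$ via a localized Poincar\'e inequality, and dominates the whole expression by $\widetilde C\int(\psi|\partial_tf|^2+\phi''|\nabla f|^2)\,dx$, which vanishes along $T_j$ by \eqref{eq:boundary10}. Your route imports the extra input \eqref{eq:caciotta} (legitimate, since it is established at the start of the proof of Lemma \ref{lem:step2} under the standing hypothesis \eqref{eq:assumptionH}), and in exchange avoids the Poincar\'e step, which for $f$ (not compactly supported in the ball) requires a word of justification; both arguments yield the same conclusion.
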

\begin{proof}
  By \eqref{eq:SA} and Cauchy-Schwartz, we have 
  \begin{align*}
  &
    \left|\langle \mathcal Af_R(T_j),\partial_tf_R(T_j)\rangle+
    \langle\psi f_R(T_j),\partial_tf_R(T_j)\rangle\right|
    \\
    & \leq
    \lambda\|\nabla\phi\|_{L^\infty}\int\left(\left|\partial_tf_R\right|^2+|\nabla f_R|^2\right)\,dx
    +\frac\lambda2\|\triangle\phi\|_{L^\infty}\int(\left|f_R\right|^2+\left|\partial_tf_R\right|^2)\,dx
    \\
    & \ \ \ 
    +\|\psi\|_{L^\infty}\int(\left|f_R\right|^2+\left|\partial_tf_R\right|^2)\,dx.
  \end{align*}
  Hence, by \eqref{eq:fieps4}, \eqref{eq:fieps5}, \eqref{eq:psi} and the Poincar\'e inequality we obtain
  \begin{align*}
  &
    \left|\langle \mathcal Af_R(t),\partial_tf_R(t)\rangle+
    \langle\psi f_R(t),\partial_tf_R(t)\rangle\right|
    \\
    & \leq
    C\lambda\int_{|x|\leq2R}\left(\left|\partial_tf\right|^2+|\nabla f|^2+|f|^2\right)\,dx
    \\
    & 
    \leq 
    C\lambda\int_{|x|\leq2R}\left(\left|\partial_tf\right|^2+|\nabla f|^2\right)\,dx
    +
    C\lambda R^2\int_{|x|\leq2R}|\nabla f|^2\,dx
    \\
    & 
    \leq\sup_{|x|\leq2R}\frac{C\lambda}{\psi}\int\psi\left|\partial_tf\right|^2\,dx+
    \sup_{|x|\leq2R}\frac{C\lambda}{\phi''}(1+R^2)\int
    \phi''|\nabla f|^2\,dx
    \\
    &
    \leq\widetilde C\int\left(\psi\left|\partial_tf\right|^2+\phi''|\nabla f|^2\right)\,dx,
  \end{align*}
  for some $C,\widetilde C>0$. The proof now follows by \eqref{eq:boundary10}, once computing in $t=T_j$.
\end{proof}
\begin{remark}
  Notice that, at this level, we cannot prove \eqref{eq:boundarynew} for $f$ instead of $f_R$. Indeed, 
  \begin{equation*}
  \sup_{x\in\R^n}\frac1{\psi}=
  \sup_{x\in\R^n}\frac1{\phi''}=
  \infty.
  \end{equation*}
\end{remark}

We are now ready to conclude the proof of Theorem \ref{thm:wave}. We can now apply again Proposition \ref{prop:1} in \eqref{eq:fr10}. Notice that by \eqref{eq:assW} we have
\begin{equation*}
\left|\widetilde W\cdot\nabla\chi_R\right|\leq\frac{C}{R^{\frac32-}}\leq\frac CR,
\qquad
(R>1)
\end{equation*} 
for some constant $C>0$. Hence,  estimating as in Lemma \ref{lem:step2}, we get
\begin{align*}
    &
    \int_{T_j}^{T_k}\int \psi\left|\partial_tf_R\right|^2\,dx\,dt+
    \lambda\int_{T_j}^{T_k}\int \phi''|\nabla f_R|^2\,dx\,dt
    +
    \lambda^3\int_{T_j}^{T_k}\int\left|f_R\right|^2\nabla\phi D^2\phi\nabla\phi\,dx\,dt
    \\
    &
    \leq
    \frac{C\lambda}{R^2}\int_{T_j}^{T_k}\int_{R\leq|x|\leq2R}|f|^2\,dx\,dt+
    \frac{C}{R^2}\int_{T_j}^{T_k}\int_{R\leq|x|\leq2R}|\nabla f|^2\,dx\,dt
    \\
    & \ \ \ 
        +
    C\left.\left|\langle Af_R,\partial_tf_R\rangle+\langle\psi f_R,\partial_tf_R\rangle\right|\,
    \right\vert_{t=T_j}^{t=T_k},
  \end{align*}
for $R>1$ and $\lambda$ large enough, depending only on $V$ and $W$. Here we choose $T_j,T_k$ to be times of the sequence given by \eqref{eq:boundarynew}. Now we take the limits $j\to-\infty$, $k\to+\infty$ and obtain, by the previous estimate and \eqref{eq:boundarynew}, that
\begin{align}\label{eq:controlR}
    &
    \int_{-\infty}^{+\infty}\int_{|x|\leq2R}\psi\left|\partial_tf\right|^2\,dx\,dt+
    \lambda\int_{-\infty}^{+\infty}\int_{|x|\leq2R}\phi''|\nabla f|^2\,dx\,dt
    \\
    & \ \ \ 
    +
    \lambda^3\int_{-\infty}^{+\infty}\int_{|x|\leq2R}|f|^2\nabla\phi D^2\phi\nabla\phi\,dx\,dt
    \nonumber
    \\
    &
    \leq
    \frac{C\lambda^2}{R^2}\int_{-\infty}^{+\infty}\int_{R\leq|x|\leq2R}|f|^2\,dx\,dt+
    \frac{C}{R^2}\int_{-\infty}^{+\infty}\int_{R\leq|x|\leq2R}|\nabla f|^2\,dx\,dt,
    \nonumber
  \end{align}
for any $R>1$, and some $C>0$. By \eqref{eq:fieps5} and \eqref{eq:fi8},
\begin{equation*}
  \sup_{R\leq|x|\leq2R}\frac{1}{\nabla\phi D^2\phi\nabla\phi}=C_1R
  \left(1+\log\frac R\epsilon\right)^2,
  \sup_{R\leq|x|\leq2R}\frac{1}{\phi''}=C_2R\left(1+\log\frac R\epsilon\right)^2,
\end{equation*}
for some constants $C_1$ and $C_2>0$. Consequently, \eqref{eq:controlR} implies
\begin{align}\label{eq:controlRRR}
    &
    \int_{-\infty}^{+\infty}\int_{|x|\leq2R}\psi\left|\partial_tf\right|^2\,dx\,dt+
    \lambda\int_{-\infty}^{+\infty}\int_{|x|\leq2R}\phi''|\nabla f|^2\,dx\,dt
    \\
    & \ \ \ 
    +
    \lambda^3\int_{-\infty}^{+\infty}\int_{|x|\leq2R}|f|^2\nabla\phi D^2\phi\nabla\phi\,dx\,dt
    \nonumber
    \\
    &
    \leq
    \frac{C\lambda^2\left(1+\log\frac R\epsilon\right)^2}{R}
    \int_{-\infty}^{+\infty}\int_{R\leq|x|\leq2R}|f|^2\nabla\phi D^2\phi\nabla\phi\,dx\,dt
    \nonumber
    \\
    & \ \ \ +
    \frac{C\left(1+\log\frac R\epsilon\right)^2}{R}\int_{-\infty}^{+\infty}
    \int_{R\leq|x|\leq2R}\phi''|\nabla f|^2\,dx\,dt,
    \nonumber
  \end{align}
for any $R>1$. The two integrals at the right-hand side of \eqref{eq:controlRRR} are finite, by \eqref{eq:step2}, for any $R$: in conclusion, taking the limit as $R\to\infty$ in 
\eqref{eq:controlRRR} yields
\begin{equation*}
  \int\int\psi\left|\partial_tf\right|^2\,dx\,dt+
    \lambda\int\int\phi''|\nabla f|^2\,dx\,dt
    +
    \lambda^3\int\int|f|^2\nabla\phi D^2\phi\nabla\phi\,dx\,dt
    =0,
\end{equation*}
which implies that $f\equiv0$, and concludes the proof.

\appendix

\section{Repulsive potentials}\label{sec:app1}

We now sketch the proof of the more general result which is mentioned in Remark \ref{rem:3} above. The scheme of the proof is in fact completely analogous to the previous one. Notice that, if $u$ satisfy \eqref{eq:wavegen}, then denoting again by $f=e^{\lambda\phi}u$ we see that $f$ satisfy \eqref{eq:feq}, with $\widetilde W$ as in \eqref{eq:wtilde} and now
\begin{equation}\label{eq:SAgen}
\mathcal S=-\triangle+V_2-\lambda^2|\nabla\phi|^2,
\qquad
\mathcal A=\lambda\left(2\nabla\phi\cdot\nabla+\triangle\phi\right).
\end{equation}
In particular, the commutator is now given by
\begin{align}\label{eq:commutatorgen}
  [\mathcal S,\mathcal A] & =-\lambda\left(4\nabla\cdot D^2\phi\nabla-4\lambda^2\nabla\phi D^2\phi\nabla\phi
  +\triangle^2\phi\right)+2\lambda\nabla\phi\cdot\nabla V_2
  \\
  & =
  -\lambda\left(4\nabla\cdot D^2\phi\nabla-4\lambda^2\nabla\phi D^2\phi\nabla\phi
  +\triangle^2\phi\right) +2\lambda\phi'\left(\partial_rV_2\right)_+
  -2\lambda\phi'\left(\partial_rV_2\right)_-,
  \nonumber
\end{align}
since $\phi$ is radial. Now the only difference with the previous proof appears in Lemma \ref{lem:positivity}, in which we need to control the additional negative term containing $\left(\partial_rV_2\right)_-$ in the previous formula for the commutator $[S,A]$. It is immediate to see that, thanks to \eqref{eq:fieps4}, \eqref{eq:fi8} and assumption \eqref{eq:assV2}, we have
\begin{equation*}
  4\lambda^3\int\nabla\phi D^2\phi\nabla\phi|f|^2\,dx-2\lambda
  \int\phi'\left(\partial_rV_2\right)_-|f|^2\,dx
  \geq C\lambda^3\int\nabla\phi D^2\phi\nabla\phi|f|^2\,dx,
\end{equation*}
for some $C>0$ and $\lambda>\lambda_0$ sufficiently large, depending on 
$\left(\partial_rV_2\right)_-$. From here, the proof follows as in the previous case.

\section{Proof of Theorem \ref{thm:wave2}}\label{sec:app2}

Also in this case, the proof is analogous to the one of Theorem \ref{thm:wave}. The main difference here is in the choice of the multipliers. Let $\alpha\in\left[0,\frac12\right)$ and $p=\frac{4-2\alpha}{3}
\in\left(1,\frac43\right]$. In this case, we need to use a multiplier $\phi\in\mathcal C^4$ with the following properties:
\begin{equation}\label{eq:fieps4gen}
  \phi'(r)\sim
  \begin{cases}
    r
    \quad
    r\in[0,1]
    \\
    r^{p-1}
    \quad
    r>1,
  \end{cases}
\end{equation}
\begin{equation}\label{eq:fieps5gen}
  \phi''(r)=
  \begin{cases}
    C
    \quad
    r\in[0,1]
    \\
    r^{p-2}
    \quad
    r>1,
  \end{cases}
\end{equation}
\begin{equation}\label{eq:strigen}
  D^2\phi(r)\geq p(p-1)r^{p-2}Id.
\end{equation}
Moreover, by formula \eqref{eq:fi6} we deduce that
\begin{equation}\label{eq:fi8gen}
  \nabla\phi D^2\phi\nabla\phi > 
  \begin{cases}
    r^2
    \quad
    r\in[0,\epsilon]
    \\
     \frac{C}{r^{4-3p}}
     \quad
     r>\epsilon,
  \end{cases}
\end{equation}
\begin{equation}\label{eq:fi9gen}
  \triangle^2\phi\in L^\infty(\R^n),
  \qquad
  \left|\triangle^2\phi\right|\leq\frac{C}{r^{p-4}}
  \quad
  (r>1).
\end{equation}
For the existence of such a function we refer the reader to the Appendix 7 part (a) in \cite{EKPV2}.
Moreover we also define 
\begin{equation}\label{eq:psigen}
  \psi(r)=\delta\lambda\phi''(r),
\end{equation}
in analogy with \eqref{eq:psi},
for some constant $\delta>0$ to be chosen sufficiently small.
One can check now that all the steps of the previous proof for Theorem \ref{thm:wave} work exactly in the same way with these new multipliers and under assumptions \eqref{eq:assV00}, 
\eqref{eq:assW00} and \eqref{eq:assumptionH00}. We omit further details (See the proof of Theorem 1 in \cite{EKPV2}).

\end{document}